\UseRawInputEncoding\documentclass[12pt, reqno]{amsart}
\usepackage[margin=1in]{geometry}
\usepackage[mathscr]{eucal}
\usepackage{bm}
\usepackage{mathptmx}
\usepackage{amssymb}
\usepackage{amsthm}
\usepackage{dcolumn}
\usepackage[all]{xy}
\usepackage{enumitem}
\usepackage{xcolor}
\usepackage[hidelinks]{hyperref}

\def\textmatrix#1&#2\\#3&#4\\{\bigl({#1 \atop #3}\ {#2 \atop #4}\bigr)}
\def\dispmatrix#1&#2\\#3&#4\\{\left({#1 \atop #3}\ {#2 \atop #4}\right)}

\newcommand{\beg}{\begin{equation}}
        \newcommand{\eeg}{\end{equation}}
\newcommand{\ben}{\begin{eqnarray*}}
        \newcommand{\een}{\end{eqnarray*}}

\newtheorem{thm}{Theorem}[section]

\newtheorem{lem}[thm]{Lemma}

\numberwithin{equation}{section} \theoremstyle{definition}
\newtheorem{defn}[thm]{Definition}
\newtheorem{rem}[thm]{Remark}

\def\textmatrix#1&#2\\#3&#4\\{\bigl({#1 \atop #3}\ {#2 \atop #4}\bigr)}
\def\dispmatrix#1&#2\\#3&#4\\{\left({#1 \atop #3}\ {#2 \atop #4}\right)}
\newcommand{\C}{\mathbb{C}}

\newcommand{\N}{\mathbb{N}}
\newcommand{\R}{\mathbb{R}}

\newcommand{\X}{\mathbb{X}}
\newcommand{\Y}{\mathbb{Y}}

\newcommand{\HS}{\mathcal{H}}
\newcommand{\KS}{\mathcal{K}}

\title[On a minimal And\^{o} dilation]{On a minimal And\^{o} dilation for a pair of strict contractions}

\author[Jana and Pal]{Swapan Jana and Sourav Pal}

\address[Swapan Jana]{Mathematics Department, Indian Institute of Technology Bombay,
		Powai, Mumbai - 400076, India.} \email{ swapan.jana@iitb.ac.in , swapan.math2015@gmail.com}

\address[Sourav Pal]{Mathematics Department, Indian Institute of Technology Bombay,
		Powai, Mumbai - 400076, India.} \email{souravpal@iitb.ac.in , souravmaths@gmail.com}

\keywords{Strict contraction, Banach space contraction, And\^{o} dilation}	
	
\subjclass[2020]{47A20, 47B01, 46B03}

\begin{document}

\begin{abstract}

The isometric dilation of a pair of commuting contractions due to And\^{o} is not minimal. We modify And\^{o}'s dilation and construct a minimal isometric dilation on $\mathcal H \oplus_2 \ell_2(\mathcal H \oplus_2 \mathcal H)$ for a commuting pair of strict contractions on a Hilbert space $\mathcal H$. In the same spirit, we construct under certain conditions a minimal And\^{o} dilation for a commuting pair of strict Banach space contractions. Further, we show that an And\^{o} dilation is possible even for a more general pair of commuting contractions $(T_1,T_2)$ on a normed space $\mathbb X$ provided that the function $A_{T_i}: \mathbb X \rightarrow \mathbb R$ given by $A_{T_i}(x)=(\|x\|^2-\|T_ix\|^2)^{\frac{1}{2}}$ defines a norm on $\mathbb X$ for $i=1,2$.
\end{abstract}

\maketitle

\section{Introduction}\label{Sec:01}

\noindent
Throughout the paper, all Banach and Hilbert spaces are over the field of complex numbers $\C$ and all operators are bounded linear operators. For two normed linear spaces $\X_1$ and $\X_2$, their \textit{direct $2$-sum} (or, \textit{orthogonal $2$-sum}) $\X_1 \oplus_2 \X_2$ is the normed linear space consisting of the pairs $(x_1,x_2)$ with $x_i\in \X_i$ ($i=1,2$) and is equipped with the norm $\|(x_1,x_2)\| = (\|x_1\|^2 + \|x_2\|^2)^{\frac{1}{2}}$. If a Banach space $\X=\X_1 \oplus_2 \X_2$ for a pair of subspaces $\X_1,\X_2$ of $\X$, then for any $(x_1,x_2) \in \X$, the elements $x_1,x_2$ are orthogonal to each other in the sense of Birkhoff-James, i.e.,  $x_1 \perp_B x_2$ and also $x_2 \perp_B x_1$. To maintain uniformity, the direct sum of two Hilbert spaces $\HS_1$ and $\HS_2$ (which is also orthogonal $2$-sum in case of Hilbert spaces) will also be denoted by $\HS_1 \oplus_2 \HS_2$. For any Banach space $\X$, the space $\ell_2(\X)$ consists of all square-summable sequences $\{x_n\}$ with entries in $\X$ and is equipped with the norm $\|\{x_n\}\|=(\sum_{n=1}^\infty \|x_n\|^2)^{\frac{1}{2}}$. An operator $T$ on a Banach space $\X$ is said to be a \textit{contraction} if $\|T\|\leq 1$ and it is called a \textit{strict contraction} if $\|T\|< 1$. For a Hilbert space contraction $T$, the defect operator and the defect space of $T$ are $D_T= (I -T^*T)^{\frac{1}{2}}$ and $\mathfrak{D}_T=\overline{Ran } \ D_T$, respectively.

\medskip

A commuting tuple of contractions $(T_1, \dots , T_n)$ acting on a Hilbert space $\HS$ is said to admit an \textit{isometric dilation} if there is a Hilbert space $\mathcal K$ that contains $\HS$ as a closed linear subspace, i.e., $\mathcal K= \HS \oplus_2 \HS_1$ for some Hilbert space $\HS_1$ and a tuple of commuting isometries $(V_1, \dots , V_n)$ acting on $\mathcal K$ such that
\begin{equation}\label{eq:101}
    P_{_{\HS}} V_1^{s_1}\dots V_n^{s_n} h= T_1^{s_1}\dots T_n^{s_n}h, \quad h\in \HS, \quad s_1, \dots ,s_n \in \N \cup\{0\},
\end{equation}
where $P_{_{\HS}}: \mathcal K \rightarrow \HS$ is the orthogonal projection. Such a dilation is said to be \textit{minimal} if 
\[
  \KS = \overline{span}\left\{V_1^{s_1}\cdots V_n^{s_n}h : h\in \HS, ~ s_1, \dotsc s_n\in \N \cup \{0\}\right\}.
\]
In 1953, B\'{e}la Sz.-Nagy in his famous paper \cite{BN} proved that every Hilbert space contraction dilates to an isometry. Moreover, minimal isometric dilation of a contraction is unique up to unitary equivalence, that is, if $V_1$ on $\mathcal K_1$ and $V_2$ on $\mathcal K_2$ are two minimal isometric dilations of a contraction $T$ on $\HS$, then there is a unitary $W:\mathcal K_1 \rightarrow \mathcal K_2$ such that $V_2=WV_1W^{-1}$. Also, up to unitary equivalence the minimal isometric dilation space of a contraction $T$ on $\HS$ is uniquely determined to be $\HS \oplus_2 \ell_2(\mathfrak{D}_T)$, e.g., see Sch\"{a}ffer's explicit isometric dilation in \cite{JJS}. Note that, $\mathfrak{D}_T=\HS$ if $T$ is a strict contraction and consequently the minimal dilation space becomes $\HS \oplus_2 \ell_2(\HS)$. Later, in 1963 And\^{o} \cite{TA} showed by an explicit construction that an isometric dilation is also possible for a pair of commuting Hilbert space contractions. However, Parrott \cite{SP} established by a counterexample that such an isometric dilation is not always possible for three or more commuting contractions. Two decades ago, in the seminal paper \cite{AMc}, Agler and McCarthy had sharpen And\^{o} dilation of a pair of commuting contractive matrices without unimodular eigenvalues on a distinguished variey in the bidisc.   Apart from these, there is an extensive literature on dilation of Hilbert space contractions in one and several variables, e.g., see \cite{AMc, B-M-S, B-Su-1, BLTT, BL, CW, CV I, CV II, LT, VM, MV, Timotin} and the references therein.

\smallskip

 In contrast to the one variable case, a minimal isometric dilation in more than one variable (if exists) is not unique upto unitary equivalence, see \cite{LT}. In Theorem \ref{thm:201} of this paper, we construct a minimal isometric dilation on $\HS \oplus_2 \ell_2(\HS \oplus_2 \HS)$ for a pair of commuting strict contractions $(T_1,T_2)$ acting on a Hilbert space $\HS$. Note that, And\^{o} considered the space $\HS \oplus_2 \ell_2(\HS \oplus_2 \HS \oplus_2 \HS \oplus_2 \HS)$ and a pair of commuting isometries acting on it for his dilation, but the dilation was not minimal. Our dilation here is minimal and it reveals a similarity in the pattern of structures of minimal dilation spaces in the following way: in one-variable theory the minimal isometric dilation space for any strict contraction on $\HS$ is $\HS \oplus_2 \ell_2(\HS)$, and for any pair of commuting strict contractions on $\HS$ a minimal dilation space could be $\HS \oplus_2 \ell_2(\HS \oplus_2 \HS)$. Nevertheless, our construction of dilation is based on And\^{o}'s original idea (as in \cite{TA}).

\smallskip

Taking cue from the explicit Hilbert space dilation of Theorem \ref{thm:201}, next we construct an isometric dilation under some conditions for a commuting pair of strict contractions on a Banach space $\X$. Finding an explicit dilation in Banach space setting is not an easy task, mainly because of the absence of adjoint of an operator. We bypass this issue in a certain way. Before coming to this point, we must mention that the definition of Banach space dilation is more subtle. The primary reason is that a Banach space projection can have norm strictly greater than $1$. However, if the dilation space is equal (or isometric) to $\X \oplus_2 \mathbb L$ for some Banach space $\mathbb L$, which goes parallel with the definition of Hilbert space dilation, then the projection onto $\X$ has norm equal to $1$. Consequently, one can define isometric dilation for Banach space contractions in the following way that generalizes the Hilbert space dilation.

\begin{defn}
A commuting tuple of contractions $(T_1,\dots ,T_n)$ acting on a Banach space $\X$ is said to have an \textit{isometric dilation} if there is a Banach space $\widetilde{\X}$ of the form $\widetilde{\X}=\X \oplus_2 \mathbb L$ for some Banach space $\mathbb L$ and a tuple of commuting isometries $(V_1, \dots ,V_n)$ on $\widetilde{\X}$ such that
\begin{equation}\label{eq:103}
  P_{_{\X}}V_1^{s_1}\dots V_n^{s_n}~x = {T}_1^{~s_1}\dots {T}_n^{~s_n} ~x,\quad x\in \X, \quad s_1,\dots ,s_n \in \N \cup \{0\},
\end{equation}
where $P_{_{\X}}: \widetilde{\X}\rightarrow \X$ is the orthogonal projection. Moreover, such a dilation is called \textit{minimal} if 
\[
 \widetilde{\X} = \overline{span}\left\{V_1^{s_1}\dotsc V_n^{s_n}x : x\in \X, ~ s_1, \dotsc, s_n\in \N \cup \{0\}\right\}.
\]
\end{defn}

There is a considerable amount of literature for Banach space dilation in one variable, e.g., see \cite{Stroescu, AL 2, SJ, JP-1, JPR} and the references therein. In this context, let us mention that the following theorem due to the authors and Roy \cite{JPR} characterizes all strict Banach space contractions that dilate to isometries.

\begin{thm}[\cite{JPR}, Theorem 7.13]\label{thm:101}
A strict contraction $T$ on a Banach space $\X$ dilates to a Banach space isometry if and only if the map $A_T:\X \to [0,\infty)$ defined by
\[
  A_T(x) = \left(\|x\|^2 - \|Tx\|^2\right)^{\frac{1}{2}}, \quad x\in \X
\]
induces a norm on $\X$. Moreover, the corresponding minimal isometric dilation space of $T$ is isometrically isomorphic to $\mathbb{X}\oplus_2 \ell_2(\mathbb{X}_0)$, where $\mathbb{X}_0$ is the Banach space $(\mathbb{X}, A_T)$.
\end{thm}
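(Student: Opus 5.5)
The plan is a Sch\"{a}ffer-type construction for sufficiency, a direct reading of the isometry condition for necessity, and a separate argument for minimality.

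\textbf{Sufficiency.} Suppose $A_T$ is a norm on $\X$. Since $\|T\|<1$, we have
\[
(1-\|T\|^2)\|x\|^2 \le A_T(x)^2 \le \|x\|^2 .
\]
So $A_T$ is equivalent to the original norm, and $\X_0=(\X,A_T)$ is a Banach space. On $\widetilde{\X}=\X\oplus_2\ell_2(\X_0)$ define
\[
V\bigl(x,(x_1,x_2,\dots)\bigr)=\bigl(Tx,(x,x_1,x_2,\dots)\bigr),
\]
where the new first entry $x$ is now measured in $\X_0$. Then
\[
\|V(x,(x_n))\|^2=\|Tx\|^2+A_T(x)^2+\sum\|x_n\|_{0}^2=\|x\|^2+\sum\|x_n\|_0^2,
\]
so $V$ is a linear isometry. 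By induction,
\[
V^n x=\bigl(T^nx,(T^{n-1}x,\dots,Tx,x,0,\dots)\bigr),
\]
hence $P_{\X}V^nx=T^nx$ and $V$ is an isometric dilation of $T$.

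\textbf{Minimality of this dilation.} The key identity is
\[
V^nx-V^{n-1}(Tx)=\bigl(0,(0,\dots,0,x,0,\dots)\bigr),
\]
with $x$ in the $n$-th slot. So every vector supported in a single coordinate of $\ell_2(\X_0)$ lies in the span of $\{V^k y : y\in\X,\ k\ge 0\}$. Finitely supported sequences are dense in $\ell_2(\X_0)$, which gives density of that span.

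\textbf{Necessity.} Let $V$ be an isometry on $\X\oplus_2\mathbb L$ dilating $T$. The case $s=1$ of the dilation identity forces $Vx=(Tx,\,l(x))$ for some linear map $l:\X\to\mathbb L$. Since $V$ is an isometry and the sum is a $2$-sum,
\[
\|x\|^2=\|Tx\|^2+\|l(x)\|^2 .
\]
Thus $A_T(x)=\|l(x)\|$, which is a seminorm as the composition of a norm with a linear map. It is definite by the same lower bound $(1-\|T\|^2)\|x\|^2\le A_T(x)^2$, so $A_T$ is a norm.

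\textbf{Main obstacle: the ``moreover'' part.} The claim that every minimal isometric dilation space is isometrically isomorphic to $\X\oplus_2\ell_2(\X_0)$ is the step I expect to be hardest, because there is no inner product in which to compare two minimal dilations. I would try to show that any minimal dilation $(W,\X\oplus_2\mathbb L)$ carries the same norm on finite combinations $\sum_k W^kx_k$ as our $V$ does. Write $Wx=(Tx,l(x))$, and argue inductively that the vectors $W^kx-W^{k-1}Tx\in\mathbb L$ behave like the orthogonal coordinate pieces $W^{k-1}l(x)$, each of norm $A_T(x)$. The aim is to use the isometric property on $2$-sums repeatedly to peel off the $\X$-component at each step. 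This yields a well-defined isometry on the dense span sending $V^kx\mapsto W^kx$, which extends to an isometric isomorphism of the completions.
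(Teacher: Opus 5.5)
The paper does not prove this statement. It is quoted from \cite{JPR} and used only as a black box (to know that each $A_{T_i}$ must be a norm and that $(\X,A_{T_i})$ is complete), so there is no in-paper argument to compare with.

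On its own merits, your sufficiency, minimality and necessity arguments are correct and complete. The bound $(1-\|T\|^2)\|x\|^2\le A_T(x)^2\le\|x\|^2$ gives both completeness of $\X_0$ and definiteness. The Sch\"{a}ffer-type shift is an isometric dilation, and the telescoping identity gives its minimality. Writing $Vx=(Tx,l(x))$ gives $A_T=\|l(\cdot)\|$. If the ``moreover'' clause only asserts that the minimal dilation so produced lives on $\X\oplus_2\ell_2(\X_0)$, you are finished.

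\textbf{The gap: the uniqueness reading.} You adopt the reading that every minimal dilation space is isometrically isomorphic to $\X\oplus_2\ell_2(\X_0)$, which is the analogue of the Hilbert-space fact recalled in the introduction. There you give an intention, not a proof. The claim that the pieces $W^{j-1}(0,l(y))$ ``behave like orthogonal coordinates'', i.e.
\[
\Bigl\|\sum_{j\ge 1}W^{j-1}(0,l(y_j))\Bigr\|^2=\sum_{j\ge 1}A_T(y_j)^2 ,
\]
is the whole content of the claim, and it is asserted without proof. Your peeling idea does work, but it needs two facts you did not state.

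First, put $\mathcal L_0=\mathrm{span}\{W^{j}(0,l(x)) : j\ge 0,\ x\in\X\}$. Then $W\mathcal L_0\subseteq\mathcal L_0\subseteq\mathbb L$, because $W^j(0,l(x))=W^{j+1}x-W^jTx$ has $\X$-component $T^{j+1}x-T^jTx=\mathbf{0}$. This uses the dilation identity for all powers, not just $s=1$.

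Second, for $x\in\X$ and $l\in\mathcal L_0$ one has $W(x,l)=(Tx,\,l(x)+Wl)$ with $Wl\in\mathbb L$. The isometry property then gives $\|l(x)+Wl\|^2=A_T(x)^2+\|l\|^2=\|l(x)\|^2+\|Wl\|^2$. Induction on the number of terms yields the display.

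You must also match the $\X$-components. Telescoping gives
$\sum_k W^kx_k=\bigl(\sum_k T^kx_k,\ \sum_{j\ge1}W^{j-1}(0,l(y_j))\bigr)$ and $\sum_kV^kx_k=\bigl(\sum_kT^kx_k,\ (y_1,y_2,\dots)\bigr)$, with the same $y_j=\sum_{k\ge j}T^{k-j}x_k$ in both. Hence the norms agree, and $\sum_kV^kx_k\mapsto\sum_kW^kx_k$ is well defined and isometric. Its extension from the complete space $\X\oplus_2\ell_2(\X_0)$ has closed range, which is dense by minimality of $W$, so it is onto.

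An equivalent route: write $W(x,l)=(Tx+Bl,\ Cx+Dl)$. The dilation identities give $BD^kC=0$ for all $k$, and minimality then forces $B=0$. The isometry condition now says that $C:\X_0\to\mathbb L$ and $D$ are isometries with $\|Cx+Dl\|^2=\|Cx\|^2+\|Dl\|^2$, which is exactly the structure of the shift on $\X\oplus_2\ell_2(\X_0)$.
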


This result will be used in sequel. If a commuting pair of strict Banach space contractions $(T_1,T_2)$ possesses an isometric dilation $(V_1,V_2)$, then each $T_i$ admits isometric dilation to $V_i$ and hence by Theorem \ref{thm:101} the map $A_{T_i}$ defines a norm on $\X$ for $i=1,2$. Thus, the fact that each $A_{T_i}$ defines a norm on $\X$ is a necessary condition for dilation of a strict Banach space pair $(T_1,T_2)$. We shall need one more condition to achieve our desired Banach space isometric dilation, and that condition will be clarified once we learn the proof of Theorem \ref{thm:201}. We obtain this dilation in Theorem \ref{thm:202}, another main result of this article. Finally, in Theorem \ref{thm:301} we show that our mechanism of constructing Banach space isometric dilation (as in Theorem \ref{thm:202}) is effective even for a broader class of contractions on normed spaces.

\section{Dilation for a pair of strict contractions }\label{sec:02}

\noindent
 It is evident from And\^{o}'s construction of isometric dilation \cite{TA} that the idea of Sch\"{a}ffer's explicit dilation in one variable \cite{JJS} has been modified as a first step. Indeed, for a commuting pair of contractions $(T_1,T_2)$ acting on a Hilbert space $\HS$, And\^{o} first considered the Hilbert space $\widehat{\KS} = \HS \oplus_2 \ell_2(\HS)$, and the isometries $V_i: \widehat{\KS} \to \widehat{\KS}$, defined by 
\[
  V_i(h, h_1, h_2, \dotsc) = (T_ih, D_{T_i}h, \mathbf{0}, h_1, h_2, \dotsc), \quad (h, h_1, h_2, \dotsc)\in \widehat{\KS}, \quad i=1,2.
\]
However, these isometries failed to dilate $(T_1,T_2)$ due to lack of commutativity. Then, he considered the Hilbert space $\KS = \HS \oplus_2 \ell_2(\HS^4)$, where $\HS^4=\HS \oplus_2 \HS \oplus_2 \HS \oplus_2 \HS$ and set the isometries $V_1$, $V_2$ on $\KS$ accordingly via a unitary $U: \HS^4 \to \HS^4$ satisfying
\begin{equation}\label{eq:102}
  U(D_{T_1}T_2h, \mathbf{0}, D_{T_2}h, \mathbf{0}) = (D_{T_2}T_1h, \mathbf{0}, D_{T_1}h, \mathbf{0}), \quad h\in \HS,
\end{equation}
which finally established the identity \eqref{eq:101} for $n=2$. Here, we replace the space $\HS^4$ by $\HS^2=\HS \oplus_2 \HS$ when $T_1,T_2$ are strict contractions. More precisely, we show that if $\|T_1\|, \|T_2\| <1$, then the alternate zeros in (\ref{eq:102}) can be removed and the desired isometries $V_1,V_2$ can be defined on $\HS \oplus_2 \ell_2(\HS \oplus_2 \HS)$. Moreover, the dilation $(V_1, V_2)$ on $\HS \oplus_2 \ell_2(\HS \oplus_2 \HS)$ is minimal.

\begin{thm}\label{thm:201}
A commuting pair of strict contractions on a Hilbert space $\HS$ admits a minimal isometric dilation on $\HS\oplus_2 \ell_2(\HS \oplus_2 \HS)$.
\end{thm}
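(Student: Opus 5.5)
Let $(T_1,T_2)$ be commuting strict contractions on $\HS$, and put $\KS=\HS\oplus_2\ell_2(\HS\oplus_2\HS)$. I follow And\^{o}'s scheme. I will first find a unitary $U$ on $\HS\oplus_2\HS$ with
\[
U(D_{T_1}T_2h, D_{T_2}h) = (D_{T_2}T_1h, D_{T_1}h), \quad h\in \HS .
\]
The two vectors have the same norm for every $h$, since
\[
\|D_{T_1}T_2h\|^2+\|D_{T_2}h\|^2=\|h\|^2-\|T_1T_2h\|^2 ,
\]
which is symmetric in $T_1,T_2$ because $T_1T_2=T_2T_1$. Hence the assignment $(D_{T_1}T_2h, D_{T_2}h)\mapsto(D_{T_2}T_1h, D_{T_1}h)$ is a well-defined isometry $U_0$ from $\mathcal M_1=\{(D_{T_1}T_2h,D_{T_2}h)\}$ onto $\mathcal M_2=\{(D_{T_2}T_1h,D_{T_1}h)\}$, and it extends to the closures. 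Strictness enters here. Because $\|T_i\|<1$, each $D_{T_i}$ is invertible, so $\mathcal M_1$ and $\mathcal M_2$ are graphs of bounded operators, namely $D_{T_1}T_2D_{T_2}^{-1}$ and $D_{T_2}T_1D_{T_1}^{-1}$ viewed over the second coordinate. Both are therefore closed. Their orthocomplements are the "adjoint graphs" $\{(x,-(D_{T_1}T_2D_{T_2}^{-1})^*x)\}$ and the analogous one, and each is unitarily isomorphic to $\HS$ via its first coordinate. The two complements thus have the same dimension as $\HS$, so $U_0$ extends to a unitary $U$ of $\HS\oplus_2\HS$.

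This step is exactly where the zero padding of \eqref{eq:102} was needed: in general the defect spaces of $U_0$ can have unequal dimensions. I expect the bookkeeping of the complements, and then minimality, to be the main obstacles.

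Next I define isometries on $\KS$ in Sch\"affer form. Set
\[
W_1(h,\xi_1,\xi_2,\dots)=(T_1h,(D_{T_1}h,0),\xi_1,\xi_2,\dots), \qquad
W_2(h,\xi_1,\dots)=(T_2h,(0,D_{T_2}h),\xi_1,\dots),
\]
with $\xi_j\in\HS^2$. To make the products match, I would instead take
\[
V_1(h,\xi_1,\xi_2,\dots)=(T_1h, G_1(h,\xi_1), G_1'\xi_1+\dots),
\]
where the component maps combine $D_{T_1}h$ and the coordinate swaps $(a,b)\mapsto(b,0),(0,a)$ of And\^{o}, conjugated by $\widetilde U=I_\HS\oplus U\oplus U\oplus\cdots$. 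Concretely, I would mimic And\^{o} with $\HS^4$ replaced by $\HS^2$. Let
\[
V_1=\widetilde U^{*}\,\widetilde W_1, \qquad V_2=\widetilde W_2\,\widetilde U ,
\]
where $\widetilde W_1(h,(a_1,b_1),(a_2,b_2),\dots)=(T_1h,(D_{T_1}h,a_1),(b_1,a_2),(b_2,a_3),\dots)$ and $\widetilde W_2$ is similar with $D_{T_2}$ feeding the other slot. Both are isometries, because they interleave the $\ell_2$ coordinates without loss. The interleaving pattern is chosen so that $\widetilde W_2\widetilde W_1$ and $\widetilde W_1\widetilde W_2$ differ in the first $\HS^2$ block only by the vectors $(D_{T_2}T_1h,D_{T_1}h)$ versus $(D_{T_1}T_2h,D_{T_2}h)$, and beyond that only by a coordinate shuffle that $U$ intertwines. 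Commutativity $V_1V_2=V_2V_1$ then reduces to the defining identity of $U$, exactly as in And\^{o}'s computation. Dilation follows since each $V_i$ has $\HS$-coinvariant form, i.e.\ $P_\HS V_i=T_iP_\HS$. Together with $V_i^*\HS\subseteq\HS$ this gives $P_\HS V_1^{m}V_2^{n}h=T_1^mT_2^nh$.

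For minimality, I will show inductively that the span of $V_1^mV_2^nh$ contains each block $\{0\}\oplus\cdots\oplus\HS^2\oplus 0\cdots$. The first block receives $V_1h-T_1h$ and $V_2h-T_2h$. These are $(D_{T_1}h,0)$ and $U(0,D_{T_2}h)$ up to the placement of $U$, and since $D_{T_i}$ are invertible they span $\HS\oplus0$ and $U(0\oplus\HS)$. I need $\HS\oplus0$ and $U(0\oplus\HS)$ to span $\HS^2$. This may force a specific choice of the extension of $U$ on the complements, for instance one mapping $\mathcal M_1^\perp$ appropriately, and I would fix that choice here. Higher blocks then follow by applying $V_1,V_2$ to lower-block vectors and using the shift structure.
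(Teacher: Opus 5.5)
The genuine gap is in the minimality step. You plan to fill the first $\HS\oplus_2\HS$ block using only $V_1h-T_1h$ and $V_2h-T_2h$, and to choose the extension of $U$ off $\mathcal M_1$ so that these span $\HS\oplus_2\HS$. No such choice exists in general.

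In the version of your construction that actually commutes (see below), $\widetilde W_2$ inserts $D_{T_2}h$ into the \emph{first} slot. The two vectors are then $(\mathbf 0,(D_{T_2}h,\mathbf 0),\mathbf 0,\dots)$ and $(\mathbf 0,U^{\pm1}(D_{T_1}h,\mathbf 0),\mathbf 0,\dots)$, so they only fill $\HS\oplus\{\mathbf 0\}$ and $U^{\pm1}(\HS\oplus\{\mathbf 0\})$. Take $T_1=T_2=0$. The defining identity reads $U(\mathbf 0,h)=(\mathbf 0,h)$, so every unitary extension fixes $\{\mathbf 0\}\oplus\HS$ pointwise and hence maps $\HS\oplus\{\mathbf 0\}$ onto itself. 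First-order vectors never reach $\{\mathbf 0\}\oplus\HS$, whatever you do on the complements.

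The missing idea is to use a word of length two, as the paper does in \eqref{9.eq:new4}. With the corrected placement below,
$V_2V_1h-V_2(T_1h)=V_2(\mathbf 0,U(D_{T_1}h,\mathbf 0),\mathbf 0,\dots)=(\mathbf 0,(\mathbf 0,D_{T_1}h),\mathbf 0,\dots)$.
Together with $V_2h-T_2h$ and the invertibility of $D_{T_1},D_{T_2}$, this puts the whole first block in the span for \emph{every} unitary extension. The same caution applies to higher blocks. A single application of $V_1$ or $V_2$ to a block-$j$ vector again reaches only $\HS\oplus\{\mathbf 0\}$ and $U(\HS\oplus\{\mathbf 0\})$ in block $j+1$. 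However, $V_1V_2$ acts on $\{\mathbf 0\}\oplus\ell_2(\HS\oplus_2\HS)$ as the block shift $(\mathbf 0,\xi_1,\xi_2,\dots)\mapsto(\mathbf 0,\mathbf 0,\xi_1,\xi_2,\dots)$ (compare \eqref{eq:204} with $h=\mathbf 0$), and this completes the induction at once.

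There are also two repairable slips in the construction.

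First, take $U(D_{T_1}T_2h,D_{T_2}h)=(D_{T_2}T_1h,D_{T_1}h)$ with your placement $V_1=\widetilde U^*\widetilde W_1$, $V_2=\widetilde W_2\widetilde U$. Then $V_2V_1=\widetilde W_2\widetilde W_1$, whereas the first block of $V_1V_2h$ is $U^*(D_{T_1}T_2h,D_{T_2}h)$. Commutativity would therefore need the identity for $U^*$, not $U$. Take $V_1=\widetilde U\widetilde W_1$ and $V_2=\widetilde W_2\widetilde U^*$ instead.

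Second, $\widetilde W_2$ cannot feed $D_{T_2}h$ into ``the other slot''. Since $\widetilde W_1$ moves the first slot of the first block into its second slot, the first block of $\widetilde W_1\widetilde W_2h$ equals $(D_{T_1}T_2h,D_{T_2}h)$, as you claim, only if $\widetilde W_2(h,(a_1,b_1),(a_2,b_2),\dots)=(T_2h,(D_{T_2}h,a_1),(b_1,a_2),\dots)$. With that choice, $\widetilde W_1\widetilde W_2$ and $\widetilde W_2\widetilde W_1$ agree exactly beyond the first block, so there is no residual shuffle for $U$ to intertwine. Your model then has the same architecture as the paper's \eqref{eq:203}. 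There, $V_1$ is a Sch\"affer-type shift followed by $I\oplus S\oplus S\oplus\cdots$, and $V_2$ is one preceded by $I\oplus S^{-1}\oplus S^{-1}\oplus\cdots$. The paper shifts one coordinate sequence at a time, whereas you shift the interleaved sequence.

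Your graph argument giving $\dim\mathcal M_1^\perp=\dim\mathcal M_2^\perp=\dim\HS$ is sound. The first-coordinate map onto $\HS$ is only a bounded bijection rather than a unitary, but equal Hilbert dimension still follows, e.g.\ via polar decomposition.
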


\begin{proof}
Let $(T_1, T_2)$ be a commuting pair of strict contractions on $\HS$, i.e., $\|T_i\|< 1$ for $i=1,2$. Let $T= T_1T_2$ and consider the subspaces
\begin{equation} \label{eqn:new-021}
M_1= \{(D_{T_1}h, D_{T_2}T_1h): h\in \HS\}, \quad \& \quad M_2= \{(D_{T_1}T_2h, D_{T_2}h): h\in \HS\}.
\end{equation}
First we show that the linear maps $W_i: \mathfrak{D}_{T} \to M_i$ ($i=1,2$) defined by $W_1(D_{T}h)= (D_{T_1}h, D_{T_2}T_1h)$ and $W_2(D_{T}h)= (D_{T_1}T_2h, D_{T_2}h)$, respectively are isometries. For any $h\in \HS$,
\begin{align*}
  \|W_1(D_{T}h)\|= \|(D_{T_1}h, D_{T_2}T_1h)\| & =\left(\|D_{T_1}h\|^2 + \|D_{T_2}T_1h\|^2 \right)^{\frac{1}{2}}\\
  & = \left(\|h\|^2 - \|T_1h\|^2 + \|T_1h\|^2 - \|T_1T_2h\|^2\right)^{\frac{1}{2}} \\
  & = \|D_Th\|,
\end{align*}
and similarly, $\|W_2(D_{T}h)\|= \|(D_{T_1}T_2h, D_{T_2}h)\| = \|D_Th\|$.
Since $W_1,W_2$ are isometries, it is evident that the subspaces $M_1$ and $M_2$ are closed in $\HS\oplus_2\HS$ and they are isometrically isomorphic to $\mathfrak{D}_T$ via $W_1$ and $W_2$, respectively. Thus, $\dim(M_i)= \dim(\mathfrak{D}_T)$ for $i=1,2$ and consequently, the map $Q: M_2 \to M_1 $ defined by
\begin{equation} \label{eqn:201-A}
  Q(D_{T_1}T_2h, D_{T_2}h) = (D_{T_1}h, D_{T_2}T_1h), \quad \quad h\in \HS,
\end{equation}
is a unitary. Now, we show that $M_1 \cap M_2 = \{\mathbf{0}\}$. Let $z\in M_1 \cap M_2$. Then there exist $h_1,~h_2\in \HS$ such that
\begin{equation}\label{eq:201}
  (D_{T_1}h_1, D_{T_2}T_1h_1) = z = (D_{T_1}T_2h_2, D_{T_2}h_2).
\end{equation}
Since $\|T_i\| < 1$ as $T_i$ is a strict contraction, $D_{T_i}$ is invertible for $i=1,2$. The component wise equality in \eqref{eq:201} and the injectivity of $D_{T_i}$ imply that $h_1= T_2h_2$ and $T_1h_1= h_2$. So, we have $T_1T_2h_i = Th_i= h_i$ for $i=1,2$, which is possible only for $h_1=\mathbf{0}=h_2$, since $\|T\|<1$. Consequently, we have $z=\mathbf{0}$, i.e. $M_1 \cap M_2= \mathbf{\{0\}}$. Again, $\|T\| < 1$ implies that $D_T$ is invertible. Thus, we have
\[
  \dim(\HS) = \dim(\mathfrak{D}_{T}) = \dim(M_i), \quad i=1,2, \quad \text{and} \quad M_1 \cap M_2 = \{\mathbf{0}\}.
\]
So, $\dim(M_1^{\perp})= \dim(M_2^{\perp})$ and thus there is a unitary between $M_1^{\perp}$ and $M_2^{\perp}$. Considering the unitary $Q$ as in (\ref{eqn:201-A}), we say that there is a unitary $S: \HS \oplus_2 \HS \to \HS \oplus_2 \HS$ extending $Q$. Therefore, $S$ is a unitary that satisfies
\begin{equation}\label{eq:202}
  S(D_{T_1}T_2h, D_{T_2}h)= (D_{T_1}h, D_{T_2}T_1h), \quad \quad h\in \HS.
\end{equation}
Now, consider the Hilbert space $\KS=\HS \oplus_2 \ell_2(\HS\oplus_2\HS)$ and linear operators $V_i: \KS\to \KS$ ($i=1,2$) defined by
\begin{equation}\label{eq:203}
\begin{gathered}
  V_1(h, (h_1,h_2), (h_3, h_4), \dotsc) := (T_1h, S(D_{T_1}h, h_2), S(h_1, h_4), S(h_3, h_6), \dotsc) \\
  V_2(h, (h_1,h_2), (h_3, h_4), \dotsc) := (T_2h, (h_1', D_{T_2}h), (h_3', h_2'), (h_5', h_4'), \dotsc),
  \end{gathered}
\end{equation}
where $h_1', h_2', \dots$ are given by the following non-canonical way: $(h_{2n-1}',h_{2n}')=S^{-1}(h_{2n-1},h_{2n})$ for $n\geq 1$. We shall see that $V_1, V_2$ are isometries as both $S$ and $S^{-1}$ are isometries. Indeed, for any $(h, (h_1,h_2), (h_3, h_4), \dotsc)\in \HS \oplus_2\ell_2(\HS \oplus_2 \HS)$ we have
\begin{align*}
\| V_1(h, (h_1,h_2), (h_3, h_4), \dotsc)\|^2 & = \|T_1h\|^2 + \|S(D_{T_1}h, h_2)\|^2 + \|S(h_1, h_4)\|^2 + \|S(h_3, h_6)\|^2 + \cdots \\
& = \|T_1h\|^2 + \|D_{T_1}h\|^2 + \|h_2\|^2 + \sum_{n=1}^\infty (\|h_{2n-1}\|^2 + \|h_{2n+2}\|^2)\tag{\text{as $S$ is a unitary}} \\
& =\|h\|^2 + \|h_2\|^2 + \sum_{n=1}^\infty (\|h_{2n-1}\|^2 + \|h_{2n+2}\|^2) \\
& = \|(h, (h_1,h_2), (h_3, h_4), \dotsc)\|^2
\end{align*}
and
\begin{align*}
\| V_2(h, (h_1,h_2), (h_3, h_4), \dotsc)\|^2 & = \|(T_2h, (h_1', D_{T_2}h), (h_3', h_2'), (h_5', h_4'), \dotsc)\|^2 \\
& = \|T_2h\|^2 + \|D_{T_2}h\|^2 + \|h_1'\|^2 + \sum_{n=1}^\infty (\|h_{2n+1}'\|^2 + \|h_{2n}'\|^2) \\
& =\|h\|^2 + \sum_{n=1}^\infty (\|h_{2n-1}'\|^2 + \|h_{2n}'\|^2) \\
& = \|h\|^2 + \sum_{n=1}^\infty (\|h_{2n-1}\|^2 + \|h_{2n}\|^2) \tag{\text{as $S^{-1}$ is unitary and $(h_{2n-1}',h_{2n}')=S^{-1}(h_{2n-1},h_{2n}), ~ n\in \N$}}\\
& = \|(h, (h_1,h_2), (h_3, h_4), \dotsc)\|^2
\end{align*}
Now, for every $n\in \N$, the isometry $V_i^n$ is of the form
\begin{equation} \label{eqn:new-002}
  V_i^{n}(h, (h_1,h_2), \dotsc) = (T_i^nh,*, *,\dotsc, ), \quad \text{ for all} \quad (h, (h_1,h_2), \dotsc)\in \KS, \quad i=1,2,
\end{equation}
where the symbol $*$ represents an element of $\HS \oplus_2 \HS$. Consequently, we have
\[
 P_{_{\HS}} V_1^{n_1}V_2^{n_2}h= T_1^{n_1}T_2^{n_2}h, \quad n_1,n_2\in \N \cup\{0\}, \quad h\in \HS,
\]
and consequently $(V_1,V_2)$ is an isometric dilation of $(T_1,T_2)$.
Next, we show that $V_1$ and $V_2$ commute. For each $(h, (h_1,h_2), (h_3,h_4),\dotsc)\in \KS$, we have
\begin{align}\label{eq:204}
  V_1 V_2(h, (h_1,h_2), \dotsc) & = V_1 (T_2h, (h_1', D_{T_2}h), (h_3', h_2'), (h_5', h_4'), \dotsc), ~~(h_{2n-1}',h_{2n}')=S^{-1}(h_{2n-1},h_{2n}) \nonumber\\
  & = (T_1T_2h, S(D_{T_1}T_2h, D_{T_2}h), S(h_1',h_2'), S(h_3',h_4'),\dotsc) \nonumber\\
  & = (T_1T_2h, S(D_{T_1}T_2h, D_{T_2}h), (h_1,h_2), (h_3,h_4),\dotsc)
\end{align}
and 
\begin{align*}
V_2V_1(h, (h_1,h_2), (h_3, h_4), \dotsc) & = V_2 (T_1h, S(D_{T_1}h, h_2), S(h_1, h_4), S(h_3, h_6), \dotsc) \\
  & = V_2 (T_1h, (k_1, k_2), (k_3, k_4), (k_5, k_6), \dotsc),
\end{align*}
where $(k_1,k_2)= S(D_{T_1}h, h_2)$ and $(k_{2n+1},k_{2n+2})= S(h_{2n-1},h_{2n+2})$ for $n\geq 1$. Therefore, it follows from the definition of $V_2$ (as in \eqref{eq:203}) that
\begin{align}
 V_2V_1(h, (h_1,h_2), (h_3, h_4), \dotsc) & = V_2 (T_1h, (k_1, k_2), (k_3, k_4), (k_5, k_6), \dotsc) \nonumber \\
  & = (T_1T_2h, (D_{T_1}h, D_{T_2}T_1h), (h_1, h_2), (h_3, h_4), \dotsc)\label{eq:205},
\end{align}
since $S^{-1}(k_1,k_2)=(D_{T_1}h, h_2)$ and $S^{-1}(k_{2n+1},k_{2n+2})= (h_{2n-1},h_{2n+2})$ for $n\geq 1$. Thus, from \eqref{eq:204}, \eqref{eq:205} and \eqref{eq:202} we have that $V_1V_2 = V_2 V_1$. 

\medskip

Now, we show that the isometric dilation $(V_1, V_2)$ on $\HS \oplus_2\ell_2(\HS \oplus_2 \HS)$ of $(T_1, T_2)$ is minimal, that is, 
\begin{align*}
  \HS \oplus_2\ell_2(\HS \oplus_2 \HS) & =\overline{span}\left\{V_1^{n_1}V_2^{n_2}h: h\in \HS,~ n_1, n_2 \in \N \cup \{0\}\right\}\\ 
  & = \overline{span}\left\{V_1^{n_1}V_2^{n_2}(h, (\mathbf{0}, \mathbf{0}), (\mathbf{0}, \mathbf{0}), \dotsc ): h\in \HS,~ n_1, n_2 \in \N \cup \{0\}\right\}.
\end{align*}
Set 
\[
\mathcal{K}_0 = span\left\{V_1^{n_1}V_2^{n_2}(h, (\mathbf{0}, \mathbf{0}), (\mathbf{0}, \mathbf{0}), \dotsc ): h\in \HS,~ n_1, n_2 \in \N \cup \{0\}\right\}.
\]
We show that $\overline{\mathcal K}_0=\HS \oplus_2\ell_2(\HS \oplus_2 \HS)$. It is evident that the elements of the form $(h, (\mathbf{0}, \mathbf{0}),\dotsc)$ are in $\mathcal{K}_0$ for all $h\in \HS$. Thus, for proving $\HS \oplus_2\ell_2(\HS \oplus_2 \HS)$ is a minimal isometric dilation space, it suffices to show that
\begin{equation}\label{9.eq:new1}
 (\mathbf{0}, \dotsc, (\mathbf{0}, \mathbf{0}), \underbrace{(h_1, h_2)}_{n\text{-th}}, (\mathbf{0}, \mathbf{0}), \dotsc )\in \mathcal{K}_0, \quad \text{ for all } n\in \N, \quad \& \quad \text{for all} \ \ h_1, h_2\in \HS.
\end{equation}
We prove Equation \eqref{9.eq:new1} by mathematical induction on $n$. First, we prove the initial case $n=1$, i.e.,
\begin{equation}\label{9.eq:new2}
(\mathbf{0}, (h_1, h_2), (\mathbf{0}, \mathbf{0}), \dotsc )\in \mathcal{K}_0, \quad \text{ for all } h_1, h_2\in \HS.
\end{equation}
To this end, first we show that
\begin{equation}\label{9.eq:new3}
   (\mathbf{0}, (h,\mathbf{0}), (\mathbf{0}, \mathbf{0}), \dotsc )\in \mathcal{K}_0, \quad \text{ and }\quad (\mathbf{0}, (\mathbf{0},h), (\mathbf{0}, \mathbf{0}), \dotsc )\in \mathcal{K}_0 \quad \text{ for all }h\in \HS.
\end{equation}
Let $h\in \HS$ be arbitrary. Since $D_{T_i}$ is invertible for $i=1,2$, there exist $h_1', h_2'\in \HS$ such that $D_{T_1}h_1' = h=D_{T_2}h_2'$. Then we have 
\begin{align}
(\mathbf{0}, (h,\mathbf{0}), (\mathbf{0}, \mathbf{0}), \dotsc ) & = (\mathbf{0}, (D_{T_1}h_1',\mathbf{0}), (\mathbf{0}, \mathbf{0}), \dotsc ) \nonumber\\
& = (T_1T_2h_1', (D_{T_1}h_1', D_{T_2}T_1h_1'), (\mathbf{0}, \mathbf{0}),\dotsc ) - (T_1T_2h_1', (\mathbf{0}, D_{T_2}T_1h_1'),(\mathbf{0}, \mathbf{0}),\dotsc ) \nonumber\\
& = (T_1T_2h_1', S(D_{T_1}T_2h_1', D_{T_2}h_1'), (\mathbf{0}, \mathbf{0}),\dotsc ) - (T_2T_1h_1', (\mathbf{0}, D_{T_2}T_1h_1', ),(\mathbf{0}, \mathbf{0}),\dotsc ) \tag{$\text{by \eqref{eq:202}}$}\nonumber\\
& = V_1(T_2h_1', (\mathbf{0}, D_{T_2}h_1'), (\mathbf{0}, \mathbf{0}),\dotsc ) - V_2 (T_1h_1', (\mathbf{0}, \mathbf{0}),\dotsc ) \nonumber\\
& = V_1V_2 (h_1', (\mathbf{0}, \mathbf{0}),\dotsc )  - V_2 (T_1h_1', (\mathbf{0}, \mathbf{0}),\dotsc ),\label{9.eq:new4}
\end{align}
and 
\begin{align}\label{9.eq:new5}
 (\mathbf{0}, (\mathbf{0}, h), (\mathbf{0}, \mathbf{0}), \dotsc ) & = (\mathbf{0}, (\mathbf{0}, D_{T_2}h_2'), (\mathbf{0}, \mathbf{0}), \dotsc ) \nonumber\\
  & = (T_2h_2', (\mathbf{0}, D_{T_2}h_2'), (\mathbf{0}, \mathbf{0}), \dotsc ) - (T_2h_2', (\mathbf{0}, \mathbf{0}),\dotsc ) \nonumber\\
  & = V_2 (h_2', (\mathbf{0}, \mathbf{0}),\dotsc ) -(T_2h_2', (\mathbf{0}, \mathbf{0}),\dotsc).
\end{align}
Therefore, \eqref{9.eq:new3} follows from \eqref{9.eq:new4} and \eqref{9.eq:new5}. Now, for all $h_1, h_2\in \HS$, we have
\begin{align*}
(\mathbf{0}, (h_1, h_2), (\mathbf{0}, \mathbf{0}), \dotsc ) = (\mathbf{0}, (h_1, \mathbf{0}), (\mathbf{0}, \mathbf{0}), \dotsc ) + (\mathbf{0}, (\mathbf{0}, h_2), (\mathbf{0}, \mathbf{0}), \dotsc ).
\end{align*}
Consequently, \eqref{9.eq:new2} follows from \eqref{9.eq:new3}, that is, \eqref{9.eq:new1} is true for $n=1$.

\medskip

Let us assume that \eqref{9.eq:new1} is true for $n=2, \dots , j$ and we prove it for $n=j+1$. Let $h_1, h_2\in \HS$ be arbitrary. We need to show that
\begin{equation}\label{9.eq:new6}
 (\mathbf{0}, \dotsc, (\mathbf{0}, \mathbf{0}), \underbrace{(h_1,h_2)}_{(j+1)\text{-th}}, (\mathbf{0}, \mathbf{0}), \dotsc) \in \mathcal{K}_0.
\end{equation}
Let $S^{-1}(h_1, h_2)= (k_1, k_2)\in \HS \oplus_2 \HS$ and consider the element
\[
   a = (\mathbf{0}, \dotsc, (\mathbf{0}, \mathbf{0}), \underbrace{(k_1,\mathbf{0})}_{j\text{-th}}, \underbrace{(\mathbf{0}, k_2)}_{(j+1)\text{-th}}, (\mathbf{0}, \mathbf{0}), \dotsc).
\]
Since $j\geq 2$, it follows from the definition of $V_1$ that
\[
  V_1 (a) = (\mathbf{0}, \dotsc, (\mathbf{0}, \mathbf{0}), \underbrace{S(k_1,k_2)}_{(j+1)\text{-th}}, (\mathbf{0}, \mathbf{0}), \dotsc) = (\mathbf{0}, \dotsc, (\mathbf{0}, \mathbf{0}), \underbrace{(h_1,h_2)}_{(j+1)\text{-th}}, (\mathbf{0}, \mathbf{0}), \dotsc).
\]
Thus, it suffices to show $a\in \mathcal{K}_0$ to establish \eqref{9.eq:new6}. Consider the element
\[
  b= (\mathbf{0}, \dotsc, (\mathbf{0}, \mathbf{0}), \underbrace{S(\mathbf{0}, k_2)}_{j\text{-th}}, (\mathbf{0}, \mathbf{0}), \dotsc).
\]
Then, by assumption we have that $b\in \mathcal{K}_0$, and it follows from the definition of $V_2$ that
\begin{equation}\label{9.eq:new7}
  V_2 (b) = (\mathbf{0}, \dotsc, (\mathbf{0}, \mathbf{0}), \underbrace{(\mathbf{0}, k_2)}_{(j+1)\text{-th}}, (\mathbf{0}, \mathbf{0}), \dotsc).
\end{equation}
Note that
\begin{align*}
  a & = (\mathbf{0}, \dotsc, (\mathbf{0}, \mathbf{0}), \underbrace{(k_1,\mathbf{0})}_{j\text{-th}}, (\mathbf{0}, \mathbf{0}), \dotsc) +  (\mathbf{0}, \dotsc, (\mathbf{0}, \mathbf{0}), \underbrace{(\mathbf{0}, k_2)}_{(j+1)\text{-th}}, (\mathbf{0}, \mathbf{0}), \dotsc) \\
  & = (\mathbf{0}, \dotsc, (\mathbf{0}, \mathbf{0}), \underbrace{(k_1,\mathbf{0})}_{j\text{-th}}, (\mathbf{0}, \mathbf{0}), \dotsc) + V_2(b), \quad \text{and} \quad b\in \mathcal{K}_0\tag{$\text{by \eqref{9.eq:new7}}$}.
\end{align*}
Therefore, $a\in \mathcal{K}_0$. So, by mathematical induction, $(V_1, V_2)$ on $\HS \oplus_2 \ell_2(\HS \oplus_2 \HS)$ is a minimal isometric dilation of $(T_1, T_2)$. This completes the proof.
\end{proof}

As is clear from the proof of Theorem \ref{thm:201} that we obtained a minimal isometric dilation on the space $\HS \oplus_2 \ell_2(\HS \oplus_2 \HS)$ by eliminating the alternate zeros from And\^{o} unitary $U$ on $\HS \oplus_2 \HS \oplus_2 \HS \oplus_2 \HS$ as in Equation-(\ref{eq:102}). Consequently, we found a suitable unitary $S$ on $\HS \oplus_2 \HS$ (see Equation-(\ref{eq:202})) that plays the role of $U$ in our case and satisfies
\begin{equation}\label{eq:104}
  S(D_{T_1}T_2h , D_{T_2}h) = (D_{T_1}h, D_{T_2}T_1h), \quad h\in \HS.
\end{equation}
We learn from \cite[Lemma 7.2]{JPR} that whenever $\|T\|<1$ and $A_T$ defines a norm on a Banach space $\X$, the space $(\X, A_T)$ is also a Banach space. Thus, in order to switch from Hilbert space to Banach space setting, first we replace the defect spaces $\mathfrak{D}_{T_i}$ (which is equal to $\HS$ when $T_i$ is a strict contraction) by the respective Banach spaces $\X_i=\left(\X,A_{T_i}\right)$ for $i=1,2$. The task is not complete unless we find a unitary operator $S: \X_1 \oplus_2 \X_2 \to \X_1 \oplus_2 \X_2$ satisfying
\begin{equation}\label{eq:105}
   S(T_2x, x) = (x,T_1x), \quad x\in \X,
\end{equation}
which is analogous to \eqref{eq:104}. However, unlike in Hilbert space setting such a unitary $S:\X_1\oplus_2 \X_2 \to \X_1\oplus_2 \X_2$ satisfying \eqref{eq:105} may not exist as a closed linear subspace of Banach space is not always complemented. For this reason, we are bound to assume the existence of such a unitary $S$ as a condition to reach at our desired dilation. However, we shall see that assuming this as a condition leads to an analogous minimal isometric dilation on $\X \oplus_2 \ell_2(\X_1 \oplus_2 \X_2)$ for a commuting pair of strict contractions on a Banach space $\X$. We begin with a preparatory lemma that records in Banach space setting the properties of the subspaces analogous to $M_1$ and $M_2$ considered in Equation-(\ref{eqn:new-021}) in the proof of Theorem \ref{thm:201}.

\begin{lem}\label{lem:201}
Let $(T_1,T_2)$ be a commuting pair of contractions on a Banach space $\X$ such that the function $A_{T_i}: \X \to [0,\infty)$ defines a norm on $\X$ for $i=1,2$. Denote $\X_i= \left(\X, A_{T_i}\right)$ for $i=1,2$. Consider the subspaces $\widehat{M}_1, \widehat{M}_2\subseteq \X_1 \oplus_2 \X_2$ given by
\begin{equation}\label{eq:new.v3.1}
  \widehat{M}_1= \{(T_2x,x): x\in \X\}, \quad \& \quad \widehat{M}_2=\{(x,T_1x): x\in \X\}.
\end{equation}

If $\|T_1T_2\| < 1$, then the following hold:
\begin{enumerate}
\item[(i)] the function $A_T:\X \to [0,\infty)$ induces a norm on $\X$, where $T= T_1T_2$;

\item[(ii)] both $\widehat{M}_1$ and $\widehat{M}_2$ are closed in $\X_1 \oplus_2 \X_2$ and $\widehat{M}_1 \cap \widehat{M}_2 = \{\mathbf{0}\}$;

\item[(iii)] if there  exist unitarily equivalent subspaces $\Y_1$, $\Y_2$ of $\X_1 \oplus_2 \X_2$ such that 
\begin{equation}\label{eq:new.v3.2}
 \widehat{M}_1\oplus_2 \Y_1 = \X_1 \oplus_2 \X_2= \widehat{M}_2\oplus_2 \Y_2,
\end{equation}
then there is a unitary $S: \X_1 \oplus_2 \X_2 \to \X_1 \oplus_2 \X_2$ such that $S(\widehat{M}_1) = \widehat{M}_2$, i.e.
\begin{equation}\label{eq:new.v3.3}
    S(T_2x, x) = (x, T_1x), \quad x\in \X.
\end{equation}
\end{enumerate}
\end{lem}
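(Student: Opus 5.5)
Part (i) reduces to a norm identity. For $x\in\X$ and $T=T_1T_2=T_2T_1$ we have
\[
A_T(x)^2=\|x\|^2-\|T_1T_2x\|^2=A_{T_2}(x)^2+A_{T_1}(T_2x)^2 .
\]
So $A_T(x)$ is exactly the $\X_1\oplus_2\X_2$-norm of the vector $J(x):=(T_2x,x)$. The map $J:\X\to \X_1\oplus_2\X_2$ is linear, so $A_T=\|J(\cdot)\|$ is automatically a seminorm: homogeneity and the triangle inequality are inherited from the norm of $\X_1\oplus_2\X_2$. It is definite because $J$ is injective, since its second coordinate is $x$ and $A_{T_2}$ is a norm. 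Alternatively, $\|T\|<1$ gives $A_T(x)^2\ge (1-\|T\|^2)\|x\|^2>0$ for $x\neq 0$. Using $J'(x)=(x,T_1x)$ and the symmetric identity $A_T(x)^2=A_{T_1}(x)^2+A_{T_2}(T_1x)^2$ shows in the same way that $A_T(x)=\|J'(x)\|$. Thus $J,J'$ are linear isometries of $(\X,A_T)$ onto $\widehat M_1,\widehat M_2$, respectively.

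For (ii), closedness follows once we know $(\X,A_T)$ is complete, since an isometric image of a complete space is complete and hence closed. Completeness: $\|T\|<1$ gives $\sqrt{1-\|T\|^2}\,\|x\|\le A_T(x)\le\|x\|$, so $A_T$ is equivalent to $\|\cdot\|$ and $(\X,A_T)$ is Banach. This is \cite[Lemma 7.2]{JPR}, or the direct argument just given. There is one subtlety. If $\X_1\oplus_2\X_2$ is only normed because $A_{T_i}$ may not be complete, "closed in $\X_1\oplus_2\X_2$" still holds, because a complete subspace of a metric space is closed; no completeness of $\X_i$ is needed.

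For the intersection, take $(T_2x,x)=(y,T_1y)$. Then $y=T_2x$ and $x=T_1y=T_1T_2x=Tx$. Since $\|T\|<1$, this forces $x=0$ and hence $y=0$. This mirrors the Hilbert argument in the proof of Theorem \ref{thm:201}, but needs no invertibility of defect operators, because the equality is literal.

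For (iii), take the unitary (surjective isometry) $R:\Y_1\to\Y_2$ given by the hypothesis. Define $Q:=J'\circ J^{-1}:\widehat M_1\to\widehat M_2$, $Q(T_2x,x)=(x,T_1x)$; it is a surjective isometry by (i). Set $S(m+y)=Qm+Ry$ for $m\in\widehat M_1$, $y\in\Y_1$. This is well defined and linear because the decomposition \eqref{eq:new.v3.2} is a direct sum. It is isometric because both decompositions are $\oplus_2$ sums:
\[
\|S(m+y)\|^2=\|Qm\|^2+\|Ry\|^2=\|m\|^2+\|y\|^2=\|m+y\|^2 .
\]
It is surjective because $\widehat M_2\oplus_2\Y_2$ is the whole space. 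This yields \eqref{eq:new.v3.3}.

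The main obstacle, as I see it, is interpretive rather than technical. One must read "$\widehat M_i\oplus_2\Y_i=\X_1\oplus_2\X_2$" as meaning that every element decomposes uniquely with the norm given by the $2$-sum formula, so that the norm computation for $S$ is legitimate. One must also confirm that the completeness issue in (ii) does not require assuming each $\X_i$ is Banach.
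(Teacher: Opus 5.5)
Your proof is correct and follows the paper's proof. The paper splits $A_T(x)^2$ as $A_{T_1}(x)^2+A_{T_2}(T_1x)^2$ and runs Minkowski in $\R^2$ by hand, which is the triangle inequality of $\X_1\oplus_2\X_2$ pulled back along your $J'$; it then uses the isometries $W_1=J$, $W_2=J'$ on $(\X,A_T)$ for closedness, the identity $x=Tx$ for the trivial intersection, and the definition $S(m+y)=Qm+Ry$ in (iii). Your differences are small refinements: you prove completeness of $(\X,A_T)$ directly from $\sqrt{1-\|T\|^2}\,\|x\|\le A_T(x)\le\|x\|$ instead of citing \cite[Lemma 7.2]{JPR}, and you make explicit two points the paper leaves implicit, namely that closedness does not need $\X_1\oplus_2\X_2$ to be complete and that the isometry of $S$ uses both decompositions being $\oplus_2$-sums.
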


\begin{proof}
$(i)$ The homogeneity, non-negativity and positivity of $A_T$ is trivial. We only prove the triangle inequality. For all $x\in \X$, we have
\[
  A_T(x) = \left(\|x\|^2 - \|T_1T_2x\|^2\right)^{\frac{1}{2}} = \left(\|x\|^2 - \|T_1x\|^2 + \|T_1x\|^2 - \|T_1T_2x\|^2\right)^{\frac{1}{2}} = \left(A_{T_1}(x)^2 + A_{T_2}(T_1x)^2\right)^{\frac{1}{2}}.
\]
Therefore, for all $x$, $y$ in $\X$ we have
\begin{align*}
 A_T(x+y) & = \left(A_{T_1}(x+y)^2 + A_{T_2}(T_1(x+y))^2\right)^{\frac{1}{2}} \\
  & \leq  \left(\left(A_{T_1}(x)+ A_{T_1}(y)\right)^2 + \left(A_{T_2}(T_1x)+A_{T_2}(T_1y)\right)^2\right)^{\frac{1}{2}} \quad \left[~\text{by Triangle inequality of }A_{T_i}~\right]\\
  & = \left\|\left(A_{T_1}(x), A_{T_2}T_1(x)\right)+ \left(A_{T_1}(y), A_{T_2}T_1(y)\right)\right\|_{\R^2}\\
  & \leq \left\|\left(A_{T_1}(x), A_{T_2}T_1(x)\right)\right\|_{\R^2} + \left\|\left(A_{T_1}(y), A_{T_2}T_1(y)\right)\right\|_{\R^2}\\
  & = A_T(x) + A_T(y),
\end{align*}
where $\|\cdot\|_{\R^2}$ is the Euclidean norm on $\R^2$.

\medskip

$(ii)$ Since $\|T\| \leq \|T_1\| \|T_2\| < 1$, and $A_T$ defines a norm on $\X$, it follows from \cite[Lemma 7.2]{JPR} that $A_T$ is a complete norm on $\X$. Consider the Banach space $\X_3=\left(\X, A_T\right)$. Then the maps $W_i: \X_3 \to \X_1 \oplus_2 \X_2$ for $i=1,2$ defined by
\[
  W_i(x) = \begin{cases}
   			 (T_2x, x) & \text{ if } i=1 \\
   			 (x, T_1x) & \text{ if } i=2
   			\end{cases}, \quad x\in \X_3,
\]
are linear isometries. Consequently, $W_i(\X_3)=\widehat{M}_i$ is a closed subspace of $\X_1 \oplus_2 \X_2$ for $i=1,2$. To show $\widehat{M}_1 \cap \widehat{M}_2 = \{\mathbf{0}\}$, let $z\in \widehat{M}_1 \cap \widehat{M}_2$. Then there exist $x, y\in \X$ such that 
\[
 (x, T_1x) = z = (T_2y, y),
\]
which shows that $x= T_1T_2x = Tx$. Since $\|T\|< 1$, we have $x= \mathbf{0}$. Consequently, $z= \mathbf{0}$.

$(iii)$ Note that the map $Q: \widehat{M}_1 \to \widehat{M}_2$ defined by 
\[
  Q(T_2x,x)= (x,T_1x), \quad x\in \X,
\]
is a unitary. Indeed, for all $x\in \X$, we have
\begin{align*}
  \|Q(T_2x, x)\|^2 = \|(x,T_1x)\|^2 & = \|x\|^2 - \|T_1x\|^2 + \|T_1x\|^2 - \|T_2T_1 x\|^2 \\
  & = \|x\|^2 - \|T_2x\|^2 + \|T_2x\|^2 - \|T_1T_2 x\|^2 \\
  & = \|(T_2x, x)\|^2.
\end{align*} 
Let $S_1:\Y_1 \to \Y_2$ be the unitary, where the subspaces $\Y_1$ and $\Y_2$ are as in the hypothesis. Then the operator $S: \X_1\oplus_2 \X_2 \to \X_1\oplus_2 \X_2$ defined by
\[
  S(m_1+y_1) = Q(m_1) + S_1(y_1), \quad \text{ for all } \quad m_1\in \widehat{M}_1 \text{ and } y_1\in \Y_1,
\]
is a unitary and satisfies the property as stated. This completes the proof.
\end{proof}

The converse to part- (iii) of Lemma \ref{lem:201} does not hold in general, which is to say that the existence of a unitary $S$ satisfying \eqref{eq:new.v3.3} does not guarantee the existence of unitarily equivalent subspaces $\Y_1$ and $\Y_2$ satisfying \eqref{eq:new.v3.2}. Once again, the reason is that the subspaces $\widehat{M}_1, \widehat{M}_2$ may not be complemented in $\X_1 \oplus_2 \X_2$. Now, we are in a position of presenting our conditional isometric dilation of a pair of strict Banach space contractions.

\begin{thm}\label{thm:202}
Let $(T_1,T_2)$ be a commuting pair of strict contractions on a Banach space $\X$ such that the function $A_{T_i}: \X \to [0,\infty)$ given by
\[
   A_{T_i}(x) = \left(\|x\|^2 - \|T_ix\|^2\right)^{\frac{1}{2}}, \quad x\in \X
\]
defines a norm on $\X$ for $i=1,2$. Let $\X_i=\left(\X, A_{T_i}\right)$ for $i=1,2$. If there is a unitary $S: \X_1 \oplus_2 \X_2 \to \X_1 \oplus_2 \X_2$ satisfying
\begin{equation}\label{eq:206}
  S(T_2x, x) = (x, T_1x), \quad x\in \X \ ,
\end{equation}
then $(T_1,T_2)$ admits a minimal isometric dilation on $\X \oplus_2 \ell_2(\X_1 \oplus_2 \X_2)$.
\end{thm}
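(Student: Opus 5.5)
We mimic the proof of Theorem \ref{thm:201} verbatim, with the defect operators $D_{T_i}$ replaced by the identity map $\X\to\X_i$ and the Hilbert norms of $D_{T_i}x$ replaced by $A_{T_i}(x)$. Set $\widetilde{\X}=\X\oplus_2\ell_2(\X_1\oplus_2\X_2)$. By \cite[Lemma 7.2]{JPR}, each $\X_i$ is a Banach space, since $\|T_i\|<1$ and $A_{T_i}$ is a norm. Hence $\widetilde{\X}$ is a Banach space of the required form $\X\oplus_2\mathbb L$. Define
\[
V_1(x,(x_1,x_2),(x_3,x_4),\dots)=(T_1x,\,S(x,x_2),\,S(x_1,x_4),\,S(x_3,x_6),\dots),
\]
\[
V_2(x,(x_1,x_2),(x_3,x_4),\dots)=(T_2x,\,(x_1',x),\,(x_3',x_2'),\,(x_5',x_4'),\dots),
\]
where $(x_{2n-1}',x_{2n}')=S^{-1}(x_{2n-1},x_{2n})$. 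In the first slot of $V_1$, $x$ is regarded as an element of $\X_1$; in the first slot of $V_2$, $x$ is regarded as an element of $\X_2$.

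\textbf{Isometry and dilation.} The norm computation for $V_i$ is the same telescoping as in Theorem \ref{thm:201}. We have $\|T_1x\|^2+A_{T_1}(x)^2=\|x\|^2$, and $S$, $S^{-1}$ preserve the $\X_1\oplus_2\X_2$ norm, so the $\ell_2$ tails are just reshuffled. Boundedness and linearity are clear. The first coordinate of $V_1^{n_1}V_2^{n_2}x$ is $T_1^{n_1}T_2^{n_2}x$, and $P_\X$ has norm one on a $2$-sum, so \eqref{eq:103} holds.

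\textbf{Commutativity.} Exactly as in \eqref{eq:204}--\eqref{eq:205}, we get
\[
V_1V_2(x,\dots)=(T_1T_2x,\,S(T_2x,x),\,(x_1,x_2),\dots),
\]
\[
V_2V_1(x,\dots)=(T_2T_1x,\,(x,T_1x),\,(x_1,x_2),\dots).
\]
These agree by \eqref{eq:206} and $T_1T_2=T_2T_1$.

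\textbf{Minimality.} Let $\mathcal K_0$ be the span of $V_1^{n_1}V_2^{n_2}(x,\mathbf 0,\dots)$. We show by induction that every element supported in the $n$-th coordinate $(y_1,y_2)$ lies in $\mathcal K_0$. For $n=1$ we use the two identities
\[
(\mathbf0,(x,\mathbf0),\mathbf0,\dots)=V_1V_2(x,\mathbf0,\dots)-V_2(T_1x,\mathbf0,\dots),
\]
\[
(\mathbf0,(\mathbf0,x),\mathbf0,\dots)=V_2(x,\mathbf0,\dots)-(T_2x,\mathbf0,\dots).
\]
These are the analogues of \eqref{9.eq:new4}--\eqref{9.eq:new5}; the invertibility of $D_{T_i}$ is no longer needed, since the identity map plays its role. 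The first uses \eqref{eq:206} in the form $S(T_2x,x)=(x,T_1x)$, so that $V_1V_2(x,\mathbf 0,\dots)-V_2(T_1x,\mathbf0,\dots)=(\mathbf0,(x,T_1x)-(\mathbf0,T_1x),\dots)$.

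The induction step copies the one in Theorem \ref{thm:201}. Write $(k_1,k_2)=S^{-1}(y_1,y_2)$ and let $a$ be the element with $(k_1,\mathbf0)$ in slot $j$ and $(\mathbf0,k_2)$ in slot $j+1$. Then $V_1a$ is the target element. Moreover $a$ is the sum of an element supported in slot $j$ and $V_2b$, where $b$ has $S(\mathbf0,k_2)$ in slot $j$. Both lie in $\mathcal K_0$ by the hypothesis at level $j$. The case $j=1$ needs a separate small check, because $V_1$ treats slot $1$ and slot $2$ differently; there one also uses the $n=1$ case.

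Finally, finitely supported sequences are dense in $\ell_2(\X_1\oplus_2\X_2)$, so $\overline{\mathcal K_0}=\widetilde{\X}$.

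\textbf{Main obstacle.} The delicate point is the bookkeeping of the shifted index pattern in $V_1$ and $V_2$, especially the low-index case $j=1$ of the induction, where the definitions differ in the first block. I would first verify that case by computing $V_1$ and $V_2$ directly on elements supported in slots $1$ and $2$.
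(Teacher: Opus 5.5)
Your proposal is correct and follows the paper's proof essentially step for step. It uses the same operators $V_1,V_2$ on $\X\oplus_2\ell_2(\X_1\oplus_2\X_2)$, the same telescoping norm computation, the same commutation check via $S(T_2x,x)=(x,T_1x)$, and the same slot-by-slot induction for minimality. Your worry about $j=1$ is unnecessary: for every $j\ge 1$ the $(j+1)$-th slot of $V_1a$ is $S(k_1,k_2)$ and that of $V_2b$ is $(\mathbf 0,k_2)$, while slot $1$ of $V_1a$ is $S(\mathbf 0,\mathbf 0)=\mathbf 0$ because $a$ has zero $\X$-component, so the step needs no separate check.
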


\begin{proof}
The proof we present here is analogous to that of Theorem \ref{thm:201}, only we avoid the adjoint of an operator. Since $\|T_i\| < 1$ and $A_{T_i}$ defines a norm on $\X$, it follows from \cite[Lemma 7.2]{JPR} that the space $\X_i= (\X, A_{T_i})$ is a Banach space for $i=1,2$. Consequently, the space $\X_1 \oplus_2 \X_2$ is also a Banach space. Consider the Banach space $\widetilde{\X}= \X \oplus_2 \ell_2(\X_1 \oplus_2 \X_2)$, and the operators $V_i:\widetilde{\X} \to \widetilde{\X}$ ($i=1,2$) defined by
\begin{equation}\label{eq:207}
\begin{gathered}
V_1(x, (x_1,x_2), (x_3,x_4), \dotsc) = (T_1x, S(x, x_2), S(x_1,x_4), S(x_3,x_6), \dotsc) \\
V_2(x, (x_1,x_2), (x_3,x_4), \dotsc) = (T_2x, (x_1', x), (x_3',x_2'), (x_5',x_4'), \dotsc), 
\end{gathered}
\end{equation}
where $x_1', x_2', \dots$ are given by $(x_{2n-1}', x_{2n}')= S^{-1}(x_{2n-1},x_{2n})$ for $n\geq 1$. Then, by an argument similar to that in the proof of Theorem \ref{thm:201}, $V_i$ is a linear isometry for $i=1,2$ as both $S$ and $S^{-1}$ are linear isometries. Also, following Equation-(\ref{eqn:new-002}) in the proof of Theorem \ref{thm:201}, we see that for all $n\in \N$ the operator $V_i^n$ for $i=1,2$ has the following form
\[
  V_i^{n}(x, (x_1,x_2), \dotsc) = (T_i^nx,*, *,\dotsc, ), \quad \text{ for all } \quad (x, (x_1,x_2), \dotsc)\in \widetilde{\X}, \quad i=1,2,
\]
where the symbol $*$ represents an element of $\X_1 \oplus_2 \X_2$. Consequently, we have
\[
  P_{_{\X}}V_1^{n_1}V_2^{n_2}x= T_1^{n_1}T_2^{n_2}x, \quad x\in \X, \quad n_i\in \N \cup \{0\}, ~~i=1,2.
\] 
Now, we show that $V_1$ and $V_2$ commutes. For each $(x, (x_1,x_2), (x_3,x_4),\dotsc)\in \widetilde{\X}$, we have
\begin{align}\label{eq:208}
  V_1 V_2(x, (x_1,x_2), (x_3, x_4), \dotsc) & = V_1 (T_2x, (x_1', x), (x_3', x_2'), (x_5', x_4'), \dotsc), ~~ (x_{2n-1}', x_{2n}') = S^{-1}(x_{2n-1},x_{2n})\nonumber\\
  & = (T_1T_2x, S(T_2x, x), S(x_1',x_2'), S(x_3',x_4'),\dotsc) \nonumber\\
  & = (T_1T_2x, S(T_2x, x), (x_1,x_2), (x_3,x_4),\dotsc)
\end{align}
and 
\begin{align*}
V_2V_1(x, (x_1,x_2), (x_3, x_4), \dotsc) & = V_2 (T_1x, S(x, x_2), S(x_1, x_4), S(x_3, x_6), \dotsc) \\
  & = V_2 (T_1x, (y_1, y_2), (y_3, y_4), (y_5, y_6), \dotsc)~~\text{ (say),}
\end{align*}
where $(y_1,y_2)= S(x, x_2)$ and $(y_{2n+1},y_{2n+2})= S(x_{2n-1},x_{2n+2})$ for $n\geq 1$. Therefore, it follows from the definition of $V_2$ (as in \eqref{eq:207}) that
\begin{align}
 V_2V_1(x, (x_1,x_2), (x_3, x_4), \dotsc) & = V_2 (T_1x, (y_1, y_2), (y_3, y_4), (y_5, y_6), \dotsc) \nonumber \\
  & = (T_1T_2x, (x, T_1x), (x_1, x_2), (x_3, x_4), \dotsc)\label{eq:209},
\end{align}
since $S^{-1}(y_1,y_2)=(x, x_2)$ and $S^{-1}(y_{2n+1},y_{2n+2})= (x_{2n-1},x_{2n+2})$ for $n\geq 1$. Consequently, it follows from \eqref{eq:208}, \eqref{eq:209} and \eqref{eq:206} that $V_1V_2 = V_2 V_1$.

\medskip

We now show that the dilation $(V_1, V_2)$ on $\widetilde{\X}= \X \oplus_2\ell_2(\X_1 \oplus_2 \X_2)$ is minimal, that is,
\begin{align*}
  \X \oplus_2\ell_2(\X_1 \oplus_2 \X_2) & = \overline{span}\left\{V_1^{n_1}V_2^{n_2}x: x\in \X,~ n_1, n_2 \in \N \cup \{0\}\right\} \\
  & = \overline{span}\left\{V_1^{n_1}V_2^{n_2}(x, (\mathbf{0}, \mathbf{0}), (\mathbf{0}, \mathbf{0}), \dotsc ): x\in \X,~ n_1, n_2 \in \N \cup \{0\}\right\}.
\end{align*}
Once again, the proof of this part is also analogous to the same part of Theorem \ref{thm:201}. Let us set
\[
\widetilde{\X}_0 = span\left\{V_1^{n_1}V_2^{n_2}(x, (\mathbf{0}, \mathbf{0}), (\mathbf{0}, \mathbf{0}), \dotsc ): x\in \X,~ n_1, n_2 \in \N \cup \{0\}\right\}.
\]
We need to show that $\overline{\widetilde{\X}}_0=\X \oplus_2\ell_2(\X_1 \oplus_2 \X_2)$. It is evident that the elements of the form $(x, (\mathbf{0}, \mathbf{0}),\dotsc)$ are in $\widetilde{\X}_0$ for all $x\in \X$. Thus, it suffices to show that
\begin{equation}\label{9.eq:new8}
 (\mathbf{0}, \dotsc, (\mathbf{0}, \mathbf{0}), \underbrace{(x_1, x_2)}_{n\text{-th}}, (\mathbf{0}, \mathbf{0}), \dotsc )\in \widetilde{\X}_0, \quad \text{ for all } n\in \N, \quad \text{ and } \quad x_i\in \X_i, ~i=1,2.
\end{equation}
We prove this by mathematical induction on $n$. First, we prove the initial case $n=1$, i.e.,
\begin{equation}\label{9.eq:new9}
(\mathbf{0}, (x_1, x_2), (\mathbf{0}, \mathbf{0}), \dotsc )\in \widetilde{\X}_0, \quad \forall x_i\in \X_i, ~i=1,2.
\end{equation}
To do so, first we show that
\begin{equation}\label{9.eq:new10}
   (\mathbf{0}, (x_1,\mathbf{0}), (\mathbf{0}, \mathbf{0}), \dotsc )\in \widetilde{\X}_0, \quad \text{ and }\quad (\mathbf{0}, (\mathbf{0},x_2), (\mathbf{0}, \mathbf{0}), \dotsc )\in \widetilde{\X}_0 \quad \forall x_i\in \X_i, ~i=1,2.
\end{equation}
Let $x_i\in \X_i$ ($i=1,2$) be arbitrary. Then we have 
\begin{align}
(\mathbf{0}, (x_1,\mathbf{0}), (\mathbf{0}, \mathbf{0}), \dotsc ) & = (T_1T_2x_1, (x_1, T_1x_1), (\mathbf{0}, \mathbf{0}),\dotsc ) - (T_1T_2x_1, (\mathbf{0}, T_1x_1),(\mathbf{0}, \mathbf{0}),\dotsc ) \nonumber\\
& = (T_1T_2x_1, S(T_2x_1, x_1), (\mathbf{0}, \mathbf{0}),\dotsc ) - (T_2T_1x_1, (\mathbf{0},T_1x_1, ),(\mathbf{0}, \mathbf{0}),\dotsc ) \tag{$\text{by \eqref{eq:206}}$}\nonumber\\
& = V_1(T_2x_1, (\mathbf{0}, x_1), (\mathbf{0}, \mathbf{0}),\dotsc ) - V_2 (T_1x_1, (\mathbf{0}, \mathbf{0}),\dotsc ) \nonumber\\
& = V_1V_2 (x_1, (\mathbf{0}, \mathbf{0}),\dotsc )  - V_2 (T_1x_1, (\mathbf{0}, \mathbf{0}),\dotsc ),\label{9.eq:new11}
\end{align}
and 
\begin{align}\label{9.eq:new12}
 (\mathbf{0}, (\mathbf{0}, x_2), (\mathbf{0}, \mathbf{0}), \dotsc ) & = (T_2x_2, (\mathbf{0}, x_2), (\mathbf{0}, \mathbf{0}), \dotsc ) - (T_2x_2, (\mathbf{0}, \mathbf{0}),\dotsc ) \nonumber\\
  & = V_2 (x_2, (\mathbf{0}, \mathbf{0}),\dotsc ) -(T_2x_2, (\mathbf{0}, \mathbf{0}),\dotsc).
\end{align}
Therefore, \eqref{9.eq:new10} follows from \eqref{9.eq:new11} and \eqref{9.eq:new12}. Now, for any $x_1\in \X_1$ and $x_2 \in \X_2$, we have
\begin{align*}
(\mathbf{0}, (x_1, x_2), (\mathbf{0}, \mathbf{0}), \dotsc ) = (\mathbf{0}, (x_1, \mathbf{0}), (\mathbf{0}, \mathbf{0}), \dotsc ) + (\mathbf{0}, (\mathbf{0}, x_2), (\mathbf{0}, \mathbf{0}), \dotsc ).
\end{align*}
Consequently, \eqref{9.eq:new9} follows from \eqref{9.eq:new10}, that is, \eqref{9.eq:new8} is true for $n=1$.

\medskip

Now, let us assume that \eqref{9.eq:new8} is true for $n=2, \dots , j$ and then let us prove that it is true for $n=j+1$. Let $x_i\in \X_i$ be arbitrary for $i=1,2$. We show that
\begin{equation}\label{9.eq:new13}
 (\mathbf{0}, \dotsc, (\mathbf{0}, \mathbf{0}), \underbrace{(x_1,x_2)}_{(j+1)\text{-th}}, (\mathbf{0}, \mathbf{0}), \dotsc) \in \widetilde{\X}_0.
\end{equation}
Let $(x_1',x_2')= S^{-1}(x_1, x_2)\in \X_1 \oplus_2 \X_2$ and consider the element
\[
   a = (\mathbf{0}, \dotsc, (\mathbf{0}, \mathbf{0}), \underbrace{(x_1',\mathbf{0})}_{j\text{-th}}, \underbrace{(\mathbf{0}, x_2')}_{(j+1)\text{-th}}, (\mathbf{0}, \mathbf{0}), \dotsc).
\]
Since $j+1\geq 2$, it follows from the definition of $V_1$ that
\[
  V_1 (a) = (\mathbf{0}, \dotsc, (\mathbf{0}, \mathbf{0}), \underbrace{S(x_1',x_2')}_{(j+1)\text{-th}}, (\mathbf{0}, \mathbf{0}), \dotsc) = (\mathbf{0}, \dotsc, (\mathbf{0}, \mathbf{0}), \underbrace{(x_1,x_2)}_{(j+1)\text{-th}}, (\mathbf{0}, \mathbf{0}), \dotsc).
\]
Thus, it suffices to show $a\in \widetilde{\X}_0$ to establish \eqref{9.eq:new13}. Consider the element
\[
  b= (\mathbf{0}, \dotsc, (\mathbf{0}, \mathbf{0}), \underbrace{S(\mathbf{0}, x_2')}_{j\text{-th}}, (\mathbf{0}, \mathbf{0}), \dotsc).
\]
Then by assumption we have $b\in \widetilde{\X}_0$, and it follows from the definition of $V_2$ that
\begin{equation}\label{9.eq:new14}
  V_2 (b) = (\mathbf{0}, \dotsc, (\mathbf{0}, \mathbf{0}), \underbrace{(\mathbf{0}, x_2')}_{(j+1)\text{-th}}, (\mathbf{0}, \mathbf{0}), \dotsc).
\end{equation}
Note that
\begin{align*}
  a & = (\mathbf{0}, \dotsc, (\mathbf{0}, \mathbf{0}), \underbrace{(x_1',\mathbf{0})}_{j\text{-th}}, (\mathbf{0}, \mathbf{0}), \dotsc) +  (\mathbf{0}, \dotsc, (\mathbf{0}, \mathbf{0}), \underbrace{(\mathbf{0}, x_2')}_{(j+1)\text{-th}}, (\mathbf{0}, \mathbf{0}), \dotsc) \\
  & = (\mathbf{0}, \dotsc, (\mathbf{0}, \mathbf{0}), \underbrace{(x_1',\mathbf{0})}_{j\text{-th}}, (\mathbf{0}, \mathbf{0}), \dotsc) + V_2(b), \quad \text{and} \quad b\in \widetilde{\X}_0\tag{$\text{by \eqref{9.eq:new14}}$}.
\end{align*}
Thus, $a\in \widetilde{\X}_0$ and by mathematical induction $(V_1,V_2)$ on $\X \oplus_2 \ell_2(\X_1\oplus_2 \X_2)$ is a minimal isometric dilation of $(T_1,T_2)$. This completes the proof.
\end{proof}

\begin{rem}\label{rem:201}
As discussed before, an isometric dilation as in Theorem \ref{thm:202} is possible if there is a unitary $S: \X_1 \oplus_2 \X_2 \to \X_1 \oplus_2 \X_2$ satisfying \eqref{eq:206}. Part-(iii) of Lemma \ref{lem:201} provides a sufficient condition for the existence of such a unitary. A closer look at the unitary $S$ reveals that we are actually seeking a unitary extension on $\X_1 \oplus_2 \X_2$ of the unitary $Q:\widehat{M}_1 \to \widehat{M}_2$ defined by
$ Q(T_2x, x) = (x, T_1x)$, $x\in \X$, where $\widehat{M}_1, \widehat{M}_2$ are given in \eqref{eq:new.v3.1}. It leads to the following general question, which seems to be too difficult to answer at this point.

\medskip

\noindent
\textit{Question.} Let $L_1$, $L_2$ be non-zero closed linear subspaces of a Banach space $\X$ satisfying $L_1 \cap L_2 = \{\mathbf{0}\}$, and let $U: L_1 \to L_2$ be a unitary operator. Is there a unitary $\widetilde{U}: \X \to \X$ that extends $U$?
\end{rem}

\section{Dilation in a more general setting}\label{sec:03}

\noindent
In this section, we show that the construction of isometric dilation for a commuting pair of strict Banach space contractions as in Theorem \ref{thm:202} is valid in a more general frame of normed spaces. The primary reason is that the function $A_{T_i}$ can define a norm on $\X$ even when $\|T_i\|=1$ for at least one $i$. One can find examples of such Banach space contractions in \cite[Subsection 7.1]{JPR}. Also, we see from \cite[Lemma 7.3]{JPR} that if $\X$ is a Banach space and $A_T$ defines a norm on $\X$ for a contraction $T$, then  the normed linear space $\X_0 = \left(\X, A_T\right)$ cannot be a Banach space if $\|T\|=1$. Let $(T_1, T_2)$ be a commuting pair of contractions (not necessarily strict contractions) on a normed linear space $\X$, which is not necessarily a Banach space, such that $A_{T_i}$ defines a norm on $\X$ for $i=1, 2$. Let $\X_i$ be the space $\left(\X, A_{T_i}\right)$ for $i=1,2$ and let us consider the following possible cases.

\begin{enumerate}

\item[(a)] If $\|T_i\|=1$ for both $i=1,2$, then $\|T_1T_2\|$ may or may not be less than $1$. So, in view of \cite[Lemma 7.3]{JPR}, we cannot assert that the subspaces $\widehat{M}_1$ and $\widehat{M}_2$ as in Lemma \ref{lem:201} are closed subspaces. However, in this case we have isometric dilation (see Theorem \ref{thm:301} below) under the same assumptions as in Theorem \ref{thm:202}. Thus, we have an isometric dilation of $(T_1,T_2)$ on a normed linear space which may or may not be a Banach space.

\smallskip

\item[(b)] If one of $\{T_1,T_2\}$ has norm equal to $1$ and the other is a strict contraction, then $\|T_1T_2\|<1$ and part-(ii) of Lemma \ref{lem:201} is valid. However, the space $\X \oplus_2 \ell_2(\X_1 \oplus_2 \X_2)$ is no longer a Banach space. Indeed, the fact that $\X \oplus_2 \ell_2(\X_1 \oplus_2 \X_2)$ is a Banach space implies that both $\X_1$ and $\X_2$ are Banach spaces, which is not true at least for the contraction having norm equal to $1$. So, we can have an isometric dilation in this case also as shown below in Theorem \ref{thm:301}. However, the dilation space is just a normed linear space then.

\end{enumerate}

\begin{thm}\label{thm:301}
Let $(T_1, T_2)$ be a commuting pair of contractions on a normed linear space $\X$ such that the function $A_{T_i}: \X \to [0,\infty)$ given by
\[
 A_{T_i}(x) = \left(\|x\|^2 - \|T_ix\|^2\right)^{\frac{1}{2}}, \quad x\in \X \, ,
\]
defines a norm on $\X$ for $i=1,2$. Let $\X_i=\left(\X, A_{T_i}\right)$ for $i=1,2$. Then $(T_1,T_2)$ dilates to a commuting pair of isometries on a normed linear space if any one of the following conditions holds:
\begin{enumerate}
\item[(i)] there is a unitary $S: \X_1 \oplus_2 \X_2 \to \X_1 \oplus_2 \X_2$ such that
\[
  S(T_2x, x) = (x, T_1x), \quad x\in \X ;
\]
\item[(ii)] there are unitarily equivalent subspaces $\Y_1$, $\Y_2$ of $\X_1 \oplus_2 \X_2$ such that 
\[
  \widehat{M}_1 \oplus_2 \Y_1 = \X_1 \oplus_2 \X_2= \widehat{M}_2\oplus_2 \Y_2,
\]
where
$
  \widehat{M}_1 =\{(T_2x,x): x\in \X\}$, and $\widehat{M}_2=\{(x,T_1x): x\in \X\}$.
\end{enumerate}

\end{thm}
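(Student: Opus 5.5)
We reduce case (ii) to case (i) and then reuse the construction of Theorem \ref{thm:202} verbatim, now on a normed space. Completeness and strictness were used in Theorem \ref{thm:202} only to make $\X_1,\X_2$ Banach spaces and to obtain minimality. Neither is needed for the dilation property itself.

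\emph{Reduction of (ii) to (i).} Define $Q:\widehat{M}_1\to\widehat{M}_2$ by $Q(T_2x,x)=(x,T_1x)$. It is well defined, since $(T_2x,x)$ determines $x$ by its second coordinate. It is surjective onto $\widehat{M}_2$ and isometric, by the same telescoping computation as in part (iii) of Lemma \ref{lem:201}:
\[
A_{T_1}(x)^2+A_{T_2}(T_1x)^2=\|x\|^2-\|T_1T_2x\|^2=A_{T_1}(T_2x)^2+A_{T_2}(x)^2 ,
\]
which uses only commutativity. Let $S_1:\Y_1\to\Y_2$ be the given unitary. Set $S(m_1+y_1)=Q(m_1)+S_1(y_1)$ for $m_1\in\widehat{M}_1$, $y_1\in\Y_1$. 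Because both decompositions are $\oplus_2$-decompositions, $S$ is a surjective linear isometry, i.e.\ a unitary, and it satisfies $S(T_2x,x)=(x,T_1x)$. This argument never uses $\|T_1T_2\|<1$ or completeness. That hypothesis was needed in Lemma \ref{lem:201} only for closedness and $\widehat{M}_1\cap\widehat{M}_2=\{\mathbf 0\}$, which are irrelevant once the direct sums are given. Hence (ii) implies (i).

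\emph{Construction under (i).} Take the normed space $\widetilde{\X}=\X\oplus_2\ell_2(\X_1\oplus_2\X_2)$ and define $V_1,V_2$ exactly as in \eqref{eq:207}, with $(x_{2n-1}',x_{2n}')=S^{-1}(x_{2n-1},x_{2n})$. The steps are as follows.

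First, $V_1,V_2$ are well defined and linear. In the $V_1$ formula the entry $x$ is placed in the first slot of $\X_1\oplus_2\X_2$, and in $V_2$ it is placed in the second slot. Both are legitimate because $\X_i$ has the same underlying vector space as $\X$.

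Second, $V_1,V_2$ are isometries. The norm computation is the one in Theorem \ref{thm:201}, with $\|D_{T_i}h\|$ replaced by $A_{T_i}(x)$, giving $\|T_ix\|^2+A_{T_i}(x)^2=\|x\|^2$. The remaining terms are rearrangements of the same $\ell_2$ sum, each entry passed through the isometry $S$ or $S^{-1}$.

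Third, the first coordinate of $V_i^n$ applied to a vector is $T_i^n$ applied to its first coordinate. Hence $P_{\X}V_1^{n_1}V_2^{n_2}x=T_1^{n_1}T_2^{n_2}x$, and the projection onto $\X$ has norm one by the $\oplus_2$ structure.

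Fourth, $V_1V_2=V_2V_1$ follows from the computations \eqref{eq:208}--\eqref{eq:209} together with $S(T_2x,x)=(x,T_1x)$.

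\emph{Main obstacle.} The delicate point is that $\widetilde{\X}$ and the $\X_i$ are only normed spaces. One must check that nothing requires completeness: the isometries and their algebra are purely algebraic, so nothing does. One must also check that $V_i$ maps the $\ell_2$ space into itself, i.e.\ that the image sequence is still square summable. This follows from the isometry computation itself. I would not claim minimality here, since the theorem only asserts existence of a dilation. The minimality argument used invertibility coming from strictness: the identities $(\mathbf 0,(x_1,\mathbf 0),\dots)=V_1V_2x_1-V_2T_1x_1$ still hold, but closures in a non-complete space are subtler. So the proof concludes once commutativity and the dilation identity are verified.
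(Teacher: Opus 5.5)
Your proposal is correct and follows essentially the paper's route. You reduce (ii) to (i) through the unitary $S=Q\oplus S_1$ of Lemma \ref{lem:201}(iii), and you rightly observe that this construction needs neither $\|T_1T_2\|<1$ nor completeness, a point the paper's appeal to that lemma leaves implicit. You then rerun the operators of \eqref{eq:207} on the normed space $\X\oplus_2\ell_2(\X_1\oplus_2\X_2)$, just as the paper does.

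Your caution about minimality is harmless, but its stated reason is inaccurate. The Banach-space minimality argument uses no invertibility, because $\X_i$ has the same underlying vector space as $\X$. Hence every finitely supported sequence lies in the span, and such sequences are dense in $\ell_2$ of any normed space.
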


\begin{proof}
Suppose condition-$(i)$ holds. If $\X$ is a Banach space and $\|T_i\|< 1$ for $i=1, 2$, then this is just Theorem \ref{thm:202}. If $\|T_i\|=1$ for at least one $i$, then it is evident from the above discussion that the space $\X \oplus_2 \ell_2(\X_1 \oplus_2 \X_2)$ is not a Banach space. However, the maps $V_1$ and $V_2$ as in \eqref{eq:207} remain isometries. The rest of the proof works similarly as of the proof of Theorem \ref{thm:202}.

\smallskip

If $\X$ is a normed linear space but not a Banach space, then the space $\X \oplus_2 \ell_2(\X_1 \oplus_2 \X_2)$ is also just a normed linear space and is not a Banach space. The rest of the argument is exactly the same as in the previous case.

\smallskip 

If condition-(ii) holds, then the proof goes well as per the explanation given in Remark \ref{rem:201}.
\end{proof}

We conclude this paper here. Parrott showed in his paper \cite{SP} the existence of a commuting triple of strict contractions $(T_1,T_2,T_3)$ on a Hilbert space that does not dilate to any commuting triple of isometries. One can expect that there exists $r \in (0,1)$ such that any commuting Hilbert space triple $(T_1,T_2,T_3)$ with $\|T_i\|\leq r$ for each $i$ must dilate to a commuting triple of isometries. However, finding such an $r$ or a sharp bound on $r$ seems very challenging at this point.

\vspace{0.3cm}

\noindent \textbf{Funding.} The first named author is supported by the ``Prime Minister's Research Fellowship (PMRF)" with Award No. PMRF-1302045. The second named author is supported in part by Core Research Grant with Award No. CRG/2023/005223 from Anusandhan National Research Foundation (ANRF) of Govt. of India.

\section{Declarations}

\noindent \textbf{Ethical Approval.} This declaration is not applicable.

\smallskip

\noindent \textbf{Competing interests.} There are no competing interests.

\smallskip

\noindent \textbf{Authors' contributions.} All authors have contributed equally.

\section{Data availability statement}

\noindent Data sharing is not applicable to this article as no datasets were generated or analysed during the current study.

\end{document}